\newtheorem{thm}{Theorem}
\newtheorem{lem}[thm]{Lemma}
\newtheorem{prop}[thm]{Proposition}
\newcommand{\R}{\mathbbm{R}}
\newcommand{\N}{\mathbbm{N}}
\newcommand{\DEF}{\sl}
\newcommand{\conv}{\mathop{\mathrm{conv}}}
\newcommand{\PT}{\mathop{\mathrm{P}_\mathrm{traveling\ salesman}}}
\newcommand{\PM}{\mathop{\mathrm{P}_\mathrm{perfect\ matching}}}
\newcommand{\xc}{\mathop{\mathrm{xc}}} 
\newcommand{\setDef}[2]{\{{#1} \mid {#2}\}}
\DeclareMathOperator{\suppOp}{supp}
\newcommand{\supp}[1]{\suppOp(#1)}
\newcommand{\sabs}[1]{{\lvert{#1}\rvert}}
\title{Uncapacitated Flow-based Extended Formulations}
\author{Samuel Fiorini\footnote{Partially supported by \emph{Fonds National de la Recherche Scientifique} (F.R.S.-FNRS) and the \emph{Actions de Recherche Concert\'ees} (ARC) fund of the French community of Belgium.} \and Kanstantsin Pashkovich\footnote{Supported by the Progetto di Eccellenza 2008-2009 of the Fondazione Cassa Risparmio di Padova e Rovigo.}}
\begin{document}

\maketitle

\begin{abstract}
An extended formulation of a polytope is a linear description of this polytope using extra variables besides the variables in which the polytope is defined. The interest of extended formulations is due to the fact that many interesting polytopes have extended formulations with a lot fewer inequalities than any linear description in the original space. This motivates the development of methods for, on the one hand, constructing extended formulations and, on the other hand, proving lower bounds on the sizes of extended formulations. 

Network flows are a central paradigm in discrete optimization, and are widely used to design extended formulations. We prove exponential lower bounds on the sizes of uncapacitated flow-based extended formulations of several polytopes, such as the (bipartite and non-bipartite) perfect matching polytope and TSP polytope. We also give new examples of flow-based extended formulations, e.g., for 0/1-polytopes defined from regular languages. Finally, we state a few open problems.
\end{abstract}

\section{Introduction}


An {\em extended formulation} (shorthand: {\em EF}) of a polytope $P \subseteq \mathbb{R}^d$ is a system of linear constraints
\begin{equation}
\label{eq:EF}
E^\leqslant x + F^\leqslant y \leqslant g^\leqslant, \quad
E^= x + F^= y = g^=
\end{equation}
with $(x,y) \in \mathbb{R}^{d+k}$ such that $x \in \mathbb{R}^d$ belongs to $P$ if and only if there exists $y \in \mathbb{R}^k$ such that $(x,y)$ satisfies \eqref{eq:EF}. An extended formulation of $P$ is simply a linear description of $P$ in an extended space. Geometrically, $P$ is described as the projection of the polyhedron\footnote{We remark that although we allow for now $Q$ to be unbounded, we will soon show that one can restrict to the case where $Q$ is bounded, that is, a polytope.} $Q \subseteq \mathbb{R}^{d+k}$ defined by \eqref{eq:EF}. More generally, we call a polyhedron $Q \subseteq \mathbb{R}^e$ an {\em extension} (or {\em lift}) of $P$ if there exists an affine map $\pi : \mathbb{R}^e \to \mathbb{R}^d$ such that $\pi(Q) = P$. 

Consider a linear description $Ax \leqslant b$ of $P$ in its original space. If $f : \mathbb{R}^d \to \mathbb{R}$ is any function, then 
\begin{equation}
\label{eq:opt_problem_EF}
\sup \setDef{f(x)}{Ax \leqslant b} = \sup \setDef{f(x)}{E^\leqslant x + F^\leqslant y \leqslant g^\leqslant, \ E^= x + F^= y = g^=}\,.
\end{equation}
Thus every optimization problem on $P$ can be reformulated as an optimization problem over any extension of $P$. This is why extended formulations are interesting for optimization: in  \eqref{eq:opt_problem_EF}, the number of constraints in the right-hand side can be much smaller than the number of constraints in the left-hand side. 

We define the {\em size} of an extended formulations as its number of inequalities, and the size of an extension as its number of facets; these turn out to be the right measures of size. Note that the size of an extended formulation is at least the size of the associated extension because every facet of a polyhedron is part of every linear description of this polyhedron (in the space in which it is defined), and every extension corresponds to an extended formulation with exactly its size.

The field of extended formulations is attracting more and more attention. In particular, size lower-bounding techniques are becoming increasingly powerful and diverse, see, e.g., \cite{Yannakakis91,
KaibelPashkovichTheis10,GP12,FioriniKaibelPashkovichTheis11,extform4,BFPS12,BM13,BP2013commInfo}. The reader will find in the surveys~\cite{CCZ10,Kaibel11,Wolsey11} a good description of the field as it was a few years ago.

In this paper, we study some restricted forms of extended formulations (extensions) which we call {\em flow-based extended formulations (extensions)}, see Section \ref{sec:flow-based_EFs} for a definition. Informally, a flow-based extension of a polytope $P$ is another polytope $Q$ that can be realized as the convex hull of all flows in some network. This definition is inspired by the prominent role played by network flows in discrete optimization: many algorithms and structural results crucially rely on network flows~\cite{AhujaMagnantiOrlinBook,SchrijverBook}. Quite a lot of known extended formulations are based on network flows, such as those obtained from dynamic programming algorithms~\cite{Martin91}.

Here, we focus on uncapacitated networks. Our main contribution is to prove size lower bounds of the form $2^{\Omega(n)}$ for uncapacitated  flow-based extended formulations of several polytopes, such as the perfect matching polytope of (bipartite and non-bipartite) complete graphs and the traveling salesman polytope of the complete graph. Our results are summarized in Table~\ref{tab:results}. Below, the notations $O^*(\cdot)$, $\Omega^*(\cdot)$ and $\Theta^*(\cdot)$ have the same meaning as the usual notations $O(\cdot)$, $\Omega(\cdot)$ and $\Theta(\cdot)$, except that polynomial factors are ignored.

\begin{table}[ht]
\centering
\begin{tabular}{|r||c|c|}
\hline
Polytope & Size bounds for general EFs & Size bounds for flow-based EFs\\
\hline
\hline
$\PM(K_{n,n})$ & $\Theta(n^2)$~\cite{Birkhoff} & $\mathbf{\Theta^*(2^n)}$\\
\hline
$\PM(K_n)$ & $\Omega(n^2)$, $O^*(2^{\frac{n}{2}})$~\cite{KaibelPashkovichTheis10,FaenzaFioriniGrappeTiwary11}
 & $\mathbf{\Omega^*(2^{\frac{n}{2}})}$, $\mathbf{O(2^{0.695n})}$\\
\hline
$\PT(K_n)$ & $2^{\Omega(\sqrt{n})}$~\cite{extform4}, $O^*(2^n)$~\cite{heldKarp70} &$\mathbf{\Omega^*(2^{\frac{n}{4}})}$, $O^*(2^n)$~\cite{heldKarp70}\\
\hline
\end{tabular}
\caption{Table of results. New results are indicated in boldface. The bounds for flow-based EFs assume that the network is uncapacitated.} \label{tab:results}
\end{table}

Before giving an outline of the paper, we briefly discuss our motivations. Lower bounds on restricted types of extended formulations have been studied by quite many authors, starting with the work of Yannakakis~\cite{Yannakakis91} on symmetric extended formulations. There has been work on hierarchies such as the Sherali-Adams~\cite{SheraliAdams1990} and Lov\'asz-Schrijver hierarchies~\cite{LovaszSchriver1991}, see, e.g., \cite{BGHMT2006,STT2007,FernandezdelaVegaMathieu2007,CMM2009,GMT2009,BenabbasMagen2010}; further work on symmetric extended formulations~\cite{KaibelPashkovichTheis10,Pashkovich12,GP12} and also work on extended formulations from low variance protocols~\cite{FaenzaFioriniGrappeTiwary11}. 

We think that the restriction of being flow-based is as natural as the restrictions studied in the aforementioned papers. Combinatorial optimization offers a variety of modeling tools beyond flows, which are the most basic and important modeling tool: e.g., matchings, polymatroids and polymatroid intersections~\cite{SchrijverBook}. It seems a worthy research goal to characterize the expressivity of these modeling tools, and give theoretical explanations of the fact that some problems can be efficiently expressed by some modeling tools and not by others. This paper is a first step in that direction. 

Of particular interest are {\em separations} between modeling tools. It is striking that all our lower bounds rely on a separation between uncapacitated and capacitated flows: while the perfect matching polytope of the complete bipartite graph $K_{n,n}$ has a $O(n^2)$-size capacitated flow-based extended formulation, we show a $\Omega^*(2^n)$ lower bound on the size of every uncapacitated flow-based extended formulations of that polytope. Via reductions, we derive from this the other lower bounds reported in Table~\ref{tab:results}.

We conclude this discussion by focussing on the traveling salesman polytope. Held and Karp~\cite{heldKarp70} gave a $O^*(2^n)$-complexity dynamic programming algorithm for the traveling salesman problem based on subsets. In our terminology, this yields a $O^*(2^n)$-size uncapacitated flow-based extended formulation for the traveling salesman polytope. In a survey paper on exact algorithms for combinatorial optimization problems, Woeginger~\cite{Woeginger03} stated as an open problem the question of determining if the traveling salesman problem has an exact algorithm of complexity $(2-\varepsilon)^n$ for some $\varepsilon > 0$. The question was answered affirmatively by Bjorklund~\cite{Bjorklund10}, at least if one tolerates randomized algorithms with small failure probability and restricts to instances where the coefficients are bounded. Our $\Omega^*(2^{\frac{n}{4}})$ lower bound for uncapacitated flow-based extended formulations for the traveling salesman polytope also applies to dynamic programming algorithms for the traveling salesman problem, which sheds some light on Woeginger's question.

The rest of the paper is organized as follows. We begin with preliminaries in Section~\ref{sec:preliminaries}: after introducing some notations, we define convex polytopes in general as well as the particular convex polytopes studied here. Then, in Section~\ref{sec:flow-based_EFs}, we formally define flow-based extended formulations, discuss an example and establish basic properties of flow-based extended formulations, focussing on the uncapacitated case. Finally, in Section \ref{sec:lower_bounds}, we prove size bounds for uncapacitated flow-based extended formulations described in Table \ref{tab:results}.

\section{Preliminaries} \label{sec:preliminaries}

Let $I$ be a finite ground set. The {\DEF incidence vector} of a subset $J \subseteq I$ is the vector $\chi^J \in \mathbb{R}^I$ defined as 
\[
  \chi^J_i = \left\{
  \begin{array}{l l}
    1 & \quad \text{if } i \in J\\
    0 & \quad \text{if } i \notin J
  \end{array} \right.
\]
for $i \in I$. For $x \in \mathbb{R}^I$, we let $x(J) := \sum_{i \in J} x_i$.

First, let $G = (V,E)$ be an undirected graph. For a subset of vertices $U\subseteq V$, we denote as $\delta(U)$ the set of edges of $G$ with exactly one endpoint in $U$. So,
\begin{eqnarray*}
\delta(U) &= &\{uv \in E : u \in U, v \notin U\}\ .
\end{eqnarray*}

Now, let $N=(V,A)$ be a directed graph. For $U \subseteq V$, we denote by $\delta^+(U)$ the set of arcs of $N$ with tail in $U$ and head in $V \setminus U$, and by $\delta^{-}(U)$ the set of arcs of $N$ with head in $U$ and tail in $V\setminus U$, i.e.
\begin{eqnarray*}
\delta^+(U) &= &\{(u,v) \in A : u\in U, v\notin U\}\ , \text{ and}\\
\delta^-(U) &= &\{(v,u) \in A : u\in U, v\notin U\}\ .
\end{eqnarray*}
As usual, for $v \in V$, we use the shortcuts $\delta(v)$, $\delta^+(v)$ and $\delta^-(v)$ for $\delta(\{v\})$, $\delta^+(\{v\})$ and $\delta^-(\{v\})$ respectively.

\subsection{Convex Polytopes and Polyhedra} \label{sec:convex_polytopes}

A {\em (convex) polytope} is a set $P \subseteq \mathbb{R}^d$ that is the convex hull of a finite set of points  in $\mathbb{R}^d$. Equivalently, $P \subseteq \mathbb{R}^d$ is a polytope if and only if $P$ is bounded and the intersection of a finite collection of closed halfspaces. This is equivalent to saying that $P$ is bounded and the set of solutions of a finite system of linear inequalities (or equalities, each of which can be represented by a pair of inequalities). A {\em (convex) polyhedron} is similar to a polytope, except that it may be unbounded. Formally, a polyhedron $Q \subseteq \mathbb{R}^d$ is any set that can be represented as the Minkowski sum of a polytope and a polyhedral cone or, equivalently, as the intersection of a finite collection of closed halfspaces. For more background on polytopes and polyhedra, see the standard reference~\cite{Ziegler}.

\subsection{Perfect Matching Polytope} \label{sec:perfect_matching_polytope}

A {\DEF perfect matching} of an undirected graph $G=(V,E)$ is set of edges $M \subseteq E$ such that every vertex of $G$ is incident to exactly one edge in $M$. The {\DEF perfect matching polytope} of the graph $G$ is the convex hull of the incidence vectors of the perfect matchings of $G,$ i.e., 
$$
\PM(G) = \conv\{\chi^M \in\mathbb{R}^E : M~ \text{perfect matching of}~ G\}\ .
$$
Edmonds \cite{Edmonds65} showed that the perfect matching polytope of $G$ is described by the following system of linear constraints (see also \cite{SchrijverBookA03}, page 438):
\begin{eqnarray}
\label{eq:odd_cut} x(\delta(U)) &\geqslant &1 \quad \text{for } U\subseteq V \text{ with } |U| \text{ odd}\ ,\\
\nonumber x(\delta(v)) &= &1 \quad \text{for } v \in V\ ,\\
\nonumber x_e &\geqslant& 0 \quad \text{for } e\in E\ .
\end{eqnarray}

In the case where the graph $G$ is bipartite, that is, when the vertex set $V$ can be partitioned into two sets $A$ and $B$ such that every edge in $E$ has an endpoint in $A$ and the other in $B$, the odd cut inequalities \eqref{eq:odd_cut} may be dropped~\cite{Birkhoff}. Thus the perfect matching polytope of a bipartite graph $G$ is described as follows:
\begin{eqnarray*}
 x(\delta(v)) &= &1 \quad \text{for } v \in V\ ,\\
 x_e &\geqslant& 0 \quad \text{for } e\in E\ .
\end{eqnarray*}

\subsection{Traveling Salesman Polytope} \label{sec:traveling_salesman_polytope}

A {\DEF Hamiltonian cycle} of $G=(V,E)$ is a connected subgraph of $G$ such that every vertex of $G$ is incident to exactly two edges in $C$. The {\DEF traveling salesman polytope} of the graph $G$ is the convex hull of the incidence vectors of the hamiltonian cycles of $G,$ i.e., 
$$
\PT(G) = \conv\{\chi^{E(C)} \in\mathbb{R}^E : C~ \text{Hamiltonian cycle of}~ G\}\ .
$$
In the formula above, $E(C)$ denotes the edge set of Hamiltonian cycle $C$.

No linear description of the traveling salesman polytope of the complete graph $K_n$ is known. Moreover no ``reasonable'' linear description of this polytope should be expected unless $\mathcal{NP}=\text{co-}\mathcal{NP}$ (see Corollary 5.16a \cite{SchrijverBookA03}).

\subsection{Flow Polyhedron} \label{sec:flow_polyhedra}

Let $N = (V,A)$ be a network with source node $s \in V$, sink node $t \in V \setminus \{s\}$ and arc capacities $c_a \in \mathbb{R}_+ \cup \{\infty\}$ for $a \in A$. An $s$--$t$ {\em flow} of value $k$ is a vector $\phi \in \mathbb{R}^A$ satisfying 
\begin{eqnarray}
\label{eq:flow_balance}
\phi(\delta^+(v)) - \phi(\delta^-(v)) &= &0 \quad \forall v \in V \setminus \{s,t\},\\
\label{eq:flow_value}
\phi(\delta^+(s)) - \phi(\delta^-(s)) &= &k,\\
\label{eq:flow_lb}
\phi_a &\geqslant &0 \quad \forall a \in A,\\
\label{eq:flow_ub}
\phi_a &\leqslant &c_a \quad \forall a \in A.
\end{eqnarray}
For a fixed $k \in \mathbb{R}$, the set of all $s$--$t$ flows of value $k$ in network $N$ defines a polyhedron $Q = Q(V,A,s,t,k,c)$ that we call {\em flow polyhedron}. 

In this paper, we will assume most of the time that the network is {\em uncapacitated}, that is, $c_a = \infty$ for all $a \in A$. This amounts to ignoring the upper bound inequalities \eqref{eq:flow_ub}.

\section{Flow-based Extended Formulations} \label{sec:flow-based_EFs}

\subsection{Definition} \label{sec:flow-based_EFs_def}

Consider again a network $N = (V,A)$ with source node $s \in V$, sink node $t \in V \setminus \{s\}$, arc capacities $c_a \in \mathbb{R}_+ \cup \{\infty\}$ for $a \in A$ and flow value $k \in \mathbb{R}_+$. We say that the flow polyhedron $Q = Q(V,A,s,t,k,c)$ is a {\em flow-based extension} of a given polytope $P$ in $\mathbb{R}^d$ if there exists a linear projection $\pi : \mathbb{R}^A \to \mathbb{R}^d$ such that $\pi(Q) = P$. 
A flow-based extension is said to be {\em uncapacitated} if the associated network is uncapacitated. 

From now on, we will always assume that the projection $\pi$ is linear. This causes essentially no loss of generality because an affine projection can be made linear at the cost of adding one new arc $(s',s)$ to the network and moving the source to the node $s'$. We denote by $M \in \mathbb{R}^{d \times A}$ the matrix of projection $\pi$, that is, the matrix $M\in \mathbb{R}^{d \times A}$ such that $\pi(\phi) = M\phi$ for all $\phi \in \mathbb{R}^A$. 

Moreover, we denote by $F \in \mathbb{R}^{(V \setminus \{s,t\}) \times A}$ the coefficient matrix of the flow balance equations. In other words, $F\phi = 0$ is the matrix form of \eqref{eq:flow_balance}. Then, the flow-based extension $Q$ can be described algebraically as:
\begin{equation}
\label{eq:flow_EF}
x = M\phi,\ F\phi = 0,\ \phi(\delta^+(s)) - \phi(\delta^-(s)) = k,\ 0 \leqslant \phi \leqslant c,
\end{equation}
We call system~\eqref{eq:flow_EF} a {\em flow-based extended formulation} of $P$.  

Notice that in the uncapacitated case, the size (that is, number of inequalities) of a flow-based extended formulation is exactly the number of arcs in the corresponding network.

Notice also that in the uncapacitated case, we can assume that $k = 1$ without loss of generality. This is because changing $k$ to $1$ simply amounts to replacing $Q$ by $(1/k)Q$. Indeed, if $\pi : \mathbb{R}^A \to \mathbb{R}^d$ projects $Q$ to $P$, then $\pi' :  \mathbb{R}^A \to \mathbb{R}^d : \phi \mapsto \pi'(\phi) := \pi(k \phi)$ projects $(1/k)Q$ to $P$. (In case $k = 0$, $Q$ is just a point. We will ignore this case in what follows.)

We will prove below that in the uncapacitated case, we can furthermore assume that $N$ is acyclic, provided $\varnothing \subsetneq P \subseteq \mathbb{R}^d_+$. In this case, $Q$ is a polytope and its vertices are the characteristic vectors $\chi^\sigma$ of all directed $s$--$t$ paths $\sigma$ in network $N$ (this follows from the well-known fact that the system \eqref{eq:flow_balance}--\eqref{eq:flow_lb} defining $Q$ is totally unimodular). We call such an extension an {\em $s$--$t$ path extension}, any corresponding extended formulation an {\em $s$--$t$ path extended formulation} and define the {\em $s$--$t$ path extension complexity} $\xc_\text{$s$--$t$ path}(P)$ of a polytope $P$ as the minimum number of arcs of a network whose $s$--$t$ path polytope is an extension of $P$. We will show that this is also the minimum size of an uncapacitated flow-based extended formulation of $P$.


%

\subsection{Example: Regular Languages}

In order to convince the reader that $s$--$t$ path extensions are quite powerful, we now discuss an illustrating example that generalizes Carr and Konjevod's flow-based extended formulation of the convex hull of even 0/1-vectors in $\mathbb{R}^n$~\cite{CK04}.

Consider a {\DEF deterministic finite automaton} $M$ over the alphabet $\{0,1\}$, that is, a $4$-tuple $(Q,\delta,q_0,F)$ where $Q$ is now a (nonempty) finite set of {\DEF states}, $\delta : Q \times \{0,1\} \to Q$ is the {\DEF transition function}, $q_0 \in Q$ is the {\DEF initial state} and $F \subseteq Q$ is the set of {\DEF accept states}. For a given input word $x = x_1 x_2 \cdots x_n$ in $\{0,1\}^*$, the automaton $M$ performs a computation starting at the initial state $q_0$ and in which the state $q_{i}$ ($i \in [n]$) is determined by the previous state $q_{i-1}$ and the $i$th letter $x_i$ of word $x$ through the equation $q_{i} = \delta(q_{i-1},x_i)$. The automaton is said to {\DEF accept} $x$ if the final state $q_n$ is an accept state, that is, $q_n$ belongs to $F$. 

The automaton $M$ defines a language $L = L(M)$ over $\{0,1\}$ consisting of all words $x \in \{0,1\}^*$ accepted by $M$. Such a language is said to be {\DEF regular}. Now pick a positive integer $n$, and consider a word $x = x_1x_2 \cdots x_n$ of length $n$ in $L$. Treating each letter of word $x$ as belonging to a different coordinate, we see that $x$ defines a $0/1$-vector $(x_1,x_2,\ldots,x_n)^\intercal$ in $\mathbb{R}^n$. By taking the convex hull of all $0/1$-vectors corresponding to all words of length $n$ in $L$, we obtain a $0/1$-polytope $P_n(L)$ in $\mathbb{R}^n$.

As we show now, one can easily construct compact flow-based extended formulations for such $0/1$-polytopes.

\begin{prop}
Let $L$ denote a regular language over $\{0,1\}$ and $M = (Q,\delta,q_0,F)$ any deterministic finite automaton recognizing the language $L$. For each positive integer $n$, there exists an $s$--$t$ path extended formulation of $P_n(L)$ with size at most $2|Q|n$.
\end{prop}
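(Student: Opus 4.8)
The plan is to build a layered network directly from the automaton $M$, where the layers correspond to positions $0,1,\dots,n$ in the word and the nodes within each layer correspond to reachable states. Concretely, I would create nodes $(q,i)$ for each state $q \in Q$ and each position $i \in \{0,1,\dots,n\}$, so that a node $(q,i)$ represents ``having read $i$ letters and currently being in state $q$''. For every state $q$ and every position $i \in \{1,\dots,n\}$, the transition function supplies two arcs from layer $i-1$ to layer $i$: a ``0-arc'' from $(q,i-1)$ to $(\delta(q,0),i)$ and a ``1-arc'' from $(q,i-1)$ to $(\delta(q,1),i)$. A directed $s$--$t$ path through this network then traces exactly one computation of $M$ on some word of length $n$, and the set of $1$-arcs it uses records which letters of that word equal $1$.

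Next I would designate the source and sink. The source $s$ is the node $(q_0,0)$, encoding that the computation starts in the initial state at position $0$. The sink $t$ should be a single node reached only from accepting final states; I would add a new terminal node $t$ together with, for each accept state $q \in F$, an arc from $(q,n)$ to $t$. With this construction, directed $s$--$t$ paths in the network are in bijection with accepted words of length $n$, i.e.\ with the words contributing to $P_n(L)$. The projection $\pi$ is defined by setting the $i$th coordinate $x_i$ of the projected point equal to the flow on the (unique) $1$-arc entering layer $i$, i.e.\ $x_i := \sum \phi_a$ summed over the $1$-arcs going into layer $i$; equivalently, $M$ is the $n \times A$ matrix whose row $i$ has a $1$ in the columns of the $1$-arcs from layer $i-1$ to layer $i$ and $0$ elsewhere. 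Since the network is acyclic (arcs only go from layer $i-1$ to layer $i$, with the final arcs going to $t$) and uncapacitated, the earlier discussion guarantees that the resulting flow polytope is the convex hull of the characteristic vectors of $s$--$t$ paths, so its image under $\pi$ is exactly $\conv$ of the $0/1$-vectors of accepted length-$n$ words, which is $P_n(L)$.

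It then remains to bound the size. Each layer $i$ contributes at most $2|Q|$ arcs going forward (two per state), for $i \in \{1,\dots,n\}$, giving at most $2|Q|n$ internal arcs; the arcs into $t$ add at most $|Q|$ more, which I would absorb into the stated bound either by a slightly more careful count or by noting these extra arcs are dominated. Since in the uncapacitated case the size of the extended formulation equals the number of arcs in the network, this yields the claimed bound of at most $2|Q|n$.

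I expect the main obstacle to be purely a matter of careful bookkeeping rather than any deep difficulty: I must verify that the bijection between $s$--$t$ paths and accepted length-$n$ words is exact (every path corresponds to a legal computation and vice versa, using determinism of $M$ so that each word yields a unique path), and that the projection picks out precisely the coordinates $x_1,\dots,x_n$ and nothing else. A secondary subtlety is handling states that are unreachable or from which no accept state is reachable — these only add arcs that carry no flow on any $s$--$t$ path, so they can be pruned without affecting $P_n(L)$, and pruning only helps the size bound. Making the arc count land at exactly $2|Q|n$ (absorbing the terminal arcs) is the one place where the constants need a moment's attention.
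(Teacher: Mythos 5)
Your construction is essentially the same as the paper's: a layered network whose node $(q,i)$ records the state after reading $i$ letters, with two labeled transition arcs per state per layer and a projection that reads off the $1$-labeled arcs, so that (by total unimodularity/acyclicity) the flow polytope's vertices are the $s$--$t$ paths and project exactly onto the accepted length-$n$ words. The one loose end you flagged---the extra $\leqslant |Q|$ unlabeled terminal arcs pushing the count past $2|Q|n$---is resolved in the paper by omitting layer $n$ entirely: the arcs encoding the $n$th letter run directly from the nodes $(q,n-1)$ into $t$ (one arc per transition $\delta(q,\sigma)\in F$, with the label $\sigma$ carried by the projection column placing $\sigma$ in coordinate $n$), and layer $0$ contains only $s=(q_0,0)$, giving the exact count $2+2|Q|(n-1)\leqslant 2|Q|n$ even in corner cases such as $|Q|=1$.
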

\begin{proof}
We define a network $N$ from automaton $M$. Besides source node $s$ and sink node $t$, network $N$ has $n-1$ nodes $(q,1)$, \ldots, $(q,n-1)$ for each state $q \in Q$. To simplify notations, we also denote $s$ by $(q_0,0)$. This defines the node set $V$ of $N$. For $i \in [n-1]$, we connect node $(q,i-1)$ to each of the nodes $(\delta(q,0),i)$ and $(\delta(q,1),i)$ by an arc.  Moreover, for each transition $q ' = \delta(q,\sigma)$ with $q' \in F$ we add an arc from node $(q,n-1)$ to sink node $t$. This defines the arc set $A$ of $N$. See Figure \ref{fig:even} for an example. In a formula, we have (with a slight abuse of notation because the network can have parallel arcs)
\begin{eqnarray*}
V &= &\{\underbrace{(q_0,0)}_{= s}\} \cup \setDef{(q,i)}{q \in Q, i \in [n-1]} \cup \{t\},\\
A &= &\setDef{((q,i-1),(\delta(q,\sigma),i))}{(q,i-1) \in N, i \in [n-1], \sigma \in \{0,1\}}\\ 
&&\mbox{} \cup \setDef{((q,n-1),t)}{\exists \sigma \in \{0,1\} : \delta(q,\sigma) \in F}\,.
\end{eqnarray*}
Each arc $a \in A$ corresponds to a transition $q' = \delta(q,\sigma)$, and is said to carry the label $\sigma \in \{0,1\}$. Thus the label carried by an arc is the symbol that caused the transition.

\begin{figure}[ht]
\centering
\input{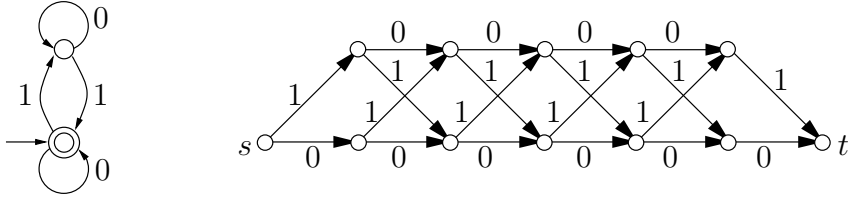_t}
\caption{Deterministic finite automaton (left) and corresponding network (right).}
\label{fig:even}
\end{figure}

In the network $N=(V,A)$, we send $k = 1$ units of flow from $s$ to $t$, setting all capacities $c_a$ to $\infty$. The column of the projection matrix corresponding to arc $a \in A$ from node $(q,i-1)$ is the $0/1$-vector $(0,\ldots,0,\sigma,0,\ldots,0)^\intercal$ with $\sigma$ in position $i$ and $0$ everywhere else, where $\sigma \in \{0,1\}$ is the label carried by arc $a$. We leave it to the reader to perform the straightforward check that this defines an $s$-$t$ path extended formulation of $P_n(L)$.

The size of this extended formulation is the number of arcs in the network, that is,
$$
2 + 2|Q|(n-1) \leqslant 2 |Q| n.
$$
\end{proof}


\subsection{Basic Properties}

\subsubsection{Nonnegativity of the Projection}

A linear projection $\pi : \mathbb{R}^A \to \mathbb{R}^d$ is called {\em nonnegative} if its projection matrix is (entry-wise) nonnegative.

\begin{lem} \label{lem:nonnegative_pi}
For every uncapacitated flow-based extension $Q \subseteq \mathbb{R}^A$, $\pi : \mathbb{R}^A \to \mathbb{R}^d$ of a polytope $P \subseteq \mathbb{R}^d_+$, there is a nonnegative linear projection $\pi' : \mathbb{R}^A \to \mathbb{R}^d$ such that $\pi'(Q) = P$.
\end{lem}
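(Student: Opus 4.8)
The plan is to keep the network and the polytope $Q$ fixed and to repair only the projection matrix $M$, one row at a time, by a reduced-cost (node-potential) transformation borrowed from min-cost flow theory. I would first invoke the reduction announced in Section~\ref{sec:flow-based_EFs_def}: since $\varnothing \subsetneq P \subseteq \R^d_+$, we may assume $N$ is acyclic, so that $Q$ is a polytope whose vertices are exactly the incidence vectors $\chi^\sigma$ of directed $s$--$t$ paths $\sigma$. The degree of freedom I exploit is this: for any potentials $y \in \R^V$ and any row index $i$, replacing $M_{i,a}$ by $M_{i,a} + y_u - y_v$ for every arc $a = (u,v)$ alters the $i$th coordinate of the projection of a flow $\phi \in Q$ only by $\sum_{v \in V} y_v\bigl(\phi(\delta^+(v)) - \phi(\delta^-(v))\bigr) = y_s - y_t$, which follows by telescoping together with the flow-balance and flow-value equations. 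Hence it suffices, for each $i$ separately, to find potentials $y = y^{(i)}$ with $y_s = y_t$ that make all the modified entries nonnegative: the resulting matrix $M'$ is then nonnegative and satisfies $M'\phi = M\phi$ on $Q$, so $\pi'(\phi) := M'\phi$ obeys $\pi'(Q) = \pi(Q) = P$.

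The elementary but essential observation is that, in every coordinate $i$, the arc weights $w_a := M_{i,a}$ are nonnegative when summed along any $s$--$t$ path. Indeed, for a directed $s$--$t$ path $\sigma$ the vector $\chi^\sigma$ is a feasible unit flow, so $\sum_{a \in \sigma} w_a = (M\chi^\sigma)_i = \pi(\chi^\sigma)_i \ge 0$ because $\pi(\chi^\sigma) \in P \subseteq \R^d_+$. Consequently the length $\mu_i := \min_\sigma \sum_{a \in \sigma} w_a$ of a shortest $s$--$t$ path, which equals $\min\{x_i : x \in P\}$, is nonnegative.

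The heart of the proof, and the step I expect to be the main obstacle, is to reconcile the constraint $y_s = y_t$ with nonnegativity of all reduced costs $w_a + y_u - y_v$. A naive choice of shortest-path potentials $d(v)$ (the length of a shortest $s$--$v$ walk, which is well defined since $N$ is acyclic and hence has no negative cycle) gives $d(s) = 0$ and the triangle inequalities $d(v) \le d(u) + w_a$, but in general it produces $d(s) \ne d(t)$. I would instead keep $y_v := d(v)$ for $v \ne s$ but set $y_s := d(t) = \mu_i$, so that $y_s = y_t$ holds by construction. One then checks nonnegativity arc by arc: for $a = (u,v)$ with $u \ne s$ the reduced cost is $w_a + d(u) - d(v) \ge 0$ by the triangle inequality; for $a = (s,v)$ it equals $w_a + \mu_i - d(v) \ge \mu_i \ge 0$, using $d(v) \le d(s) + w_a = w_a$ together with $\mu_i \ge 0$; and any arc $(u,s)$ entering the source must have $u$ unreachable from $s$ (otherwise $s \to^* u \to s$ would be a directed cycle, contradicting acyclicity), so it carries zero flow on all of $Q$ and its entry may simply be set to $0$. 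It is precisely here that both hypotheses are used in an irreplaceable way: $\mu_i \ge 0$ comes from $P \subseteq \R^d_+$, while acyclicity is what allows the source-incident arcs to absorb the shift $y_s = d(t)$ without creating negative reduced costs.

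Finally I would run this construction independently for each coordinate $i$, obtaining a nonnegative matrix $M'$ with $M'\phi = M\phi$ for all $\phi \in Q$, and conclude that $\pi'(\phi) = M'\phi$ is a nonnegative linear projection with $\pi'(Q) = P$, as required.
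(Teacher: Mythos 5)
Your proof rests on a circular citation, and that is a genuine gap. The acyclicity reduction you invoke at the outset (``we may assume $N$ is acyclic'') is Lemma~\ref{lem:acyclic}, and in the paper that lemma is proved \emph{as a consequence of} Lemma~\ref{lem:nonnegative_pi}: nonnegativity of the projection is exactly what forces the columns of $M$ on cycle arcs to vanish, which is what licenses contracting the strongly connected components. So Lemma~\ref{lem:nonnegative_pi} must be proved for arbitrary, possibly cyclic networks --- both because it is stated for \emph{every} uncapacitated flow-based extension and because the paper's later development needs it in that generality. (There is also a smaller mismatch: the reduction replaces the network, hence changes the ambient space $\R^A$ and the polyhedron $Q$, whereas the lemma asserts the existence of $\pi'$ for the \emph{given} $Q \subseteq \R^A$.) The paper avoids all structural assumptions with a one-line duality argument: row by row, either there exist multipliers $\Lambda_i$ of the interior flow-balance equations with $M_i + \Lambda_i F \geqslant 0$, or Farkas' lemma produces $\psi \geqslant 0$ with $F\psi = 0$ and $M_i \psi < 0$, which after scaling to value $k$ is a point of $Q$ projecting outside $\R^d_+$ --- contradiction. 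Note that your potential vector $y$ with $y_s = y_t$ is precisely such a $\Lambda_i$ (the paper's $\Lambda_i$ has no components at $s$ and $t$, which automatically enforces your constraint $y_s = y_t$ up to a common shift), so your shortest-path construction is really a combinatorial certificate for the feasible side of the paper's Farkas alternative, valid only under the acyclicity hypothesis you were not entitled to.

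The combinatorial route can be repaired, but not for free. The needed first step, which does not use nonnegativity, is that every directed cycle $C$ has zero weight in every coordinate: for any $\phi \in Q$ and $K \geqslant 0$ one has $\phi + K\chi^C \in Q$, and boundedness of $P = \pi(Q)$ forces $M\chi^C = 0$ (the case $P = \varnothing$ being trivial). This makes shortest-walk distances from $s$ well defined even in the presence of cycles. However, your cleanup of ``arcs carrying no flow'' then fails: in a cyclic network, arcs on directed cycles \emph{do} carry flow ($\chi^C$ lies in the recession cone of $Q$), so an arc entering $s$ cannot simply be zeroed when $s$ lies on a cycle; instead one needs exact potentials inside each strongly connected component (possible since all cycles have weight zero) stitched along the acyclic condensation. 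Finally, even in the acyclic case your triangle-inequality check silently assumes $d(u)$ and $d(v)$ are finite: arcs whose tail is unreachable from $s$, or whose head cannot reach $t$, must be zeroed by the same zero-flow observation you applied only to arcs entering $s$. (Your implicit normalization $k = 1$ is fine; the paper justifies it by rescaling $Q$.) None of these repairs is impossible, but as written the proof is circular and incomplete in the cases the lemma is actually needed for, which is exactly what the paper's Farkas argument sidesteps.
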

\begin{proof}
As above, let $M$ denote the matrix of $\pi$. It suffices to show that for every row $M_i$ of the matrix $M$ there exists a row vector $\Lambda_i \in (\mathbb{R}^{V \setminus \{s,t\}})^*$ such that $M_i + \Lambda_i F \geqslant 0$, since due to~\eqref{eq:flow_EF} the system $F\phi = 0$ holds for all $\phi\in Q$ and thus $(M + \Lambda F) \phi = M \phi + \Lambda F \phi = M\phi$.

Suppose, for the sake of contradiction, that no such $\Lambda_i$ exists for some $i$. Then by Farkas' lemma, there exists a vector $\psi \in\mathbb{R}^{A}$ such that 
\begin{equation*}
 F\psi = 0,\quad \psi \geqslant 0 \quad \text{and} \quad M_i \psi < 0\,.
\end{equation*}
Thus $\psi$ is an $s$--$t$ flow in $N$. Because the network is uncapacitated, we can assume that the value of $\psi$ is precisely $k$, by scaling $\psi$ if necessary, hence $\psi \in Q$. Now, the inequality $M_i \psi < 0$ means that the $i$th coordinate of the projection $\pi(\psi) = M\psi$ is negative, which gives the desired contradiction.
\end{proof}

\subsubsection{Acyclicity of the Network}


\begin{lem} \label{lem:acyclic}
The network associated to every minimum size uncapacitated flow-based extension $Q \subseteq \mathbb{R}^A$ of a nonempty polytope $P \subseteq \mathbb{R}_+^d$ is acyclic.
\end{lem}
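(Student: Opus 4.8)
The plan is to normalize the instance, extract a clean vertex/recession description of the flow polytope, and then contradict minimality by contracting a single arc lying on a hypothetical directed cycle.

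First I would invoke the normalizations already available: by the discussion preceding the statement we may assume $k=1$, and by Lemma~\ref{lem:nonnegative_pi} we may assume that the projection matrix $M$ is nonnegative (replacing $\pi$ by a nonnegative projection does not alter the network, hence preserves minimality). Next I would record the two structural facts that drive everything. Since $Q\subseteq\R^A_+$ it is pointed, its recession cone is exactly the cone of nonnegative circulations (generated by the indicators $\chi^C$ of directed cycles $C$), and a standard perturbation argument — any flow whose support contains a directed cycle, or decomposes into two distinct $s$--$t$ paths, is a proper midpoint of $Q$ — shows that the vertices of $Q$ are precisely the single-path flows $\chi^\sigma$ for simple directed $s$--$t$ paths $\sigma$. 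Because $P=\pi(Q)$ is bounded, $\pi$ must annihilate the recession cone, so $M\chi^C=0$ for every directed cycle $C$; combined with $M\geqslant 0$ this forces $M_a=0$ for every arc $a$ lying on a directed cycle. The same observation yields the convenient description $P=\conv\{M\chi^\sigma : \sigma \text{ a simple } s\text{--}t \text{ path}\}$.

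Then I would clear the source and sink of cycles. No simple $s$--$t$ path uses an arc of $\delta^-(s)$ or of $\delta^+(t)$, so by the description above deleting all such arcs leaves $P$ unchanged; by minimality there are none, and hence no directed cycle can pass through $s$ or through $t$. Consequently, if $N$ has a directed cycle $C$, every arc of $C$ has both endpoints among the internal nodes, and I may pick $a_0=(u,v)\in C$ with $u,v\notin\{s,t\}$ and $M_{a_0}=0$, and contract $a_0$, merging $u$ and $v$ into a single node $w$. The resulting network $N'$ has one fewer arc and keeps the same (restricted) projection $M$, so it suffices to check that $\pi(Q')=P$, where $Q'$ is the flow polytope of $N'$.

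The verification $\pi(Q')=P$ is the crux, and the step I expect to be the main obstacle. For $P\subseteq\pi(Q')$, the image in $N'$ of any simple $s$--$t$ path of $N$ is a flow of value $1$ with the same projection (if the path used $a_0$, its vanishing column is simply dropped). For the reverse inclusion I would argue that every simple $s$--$t$ path of $N'$ lifts to an $s$--$t$ walk of $N$ with the same projection: the delicate case is a path that enters $w$ along an arc into $v$ and leaves along an arc out of $u$, which I lift by inserting the back path $R=C\setminus\{a_0\}$ from $v$ to $u$; all arcs of $R$ lie on $C$, hence carry zero columns, so the projection is unchanged, and the lifted $s$--$t$ walk projects into $P$ as an element of $\pi(Q)$. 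Recession directions of $Q'$ again project to $0$, so $\pi(Q')$ is the convex hull of these path projections and equals $P$. This produces a strictly smaller uncapacitated flow-based extension of $P$, contradicting minimality and establishing acyclicity. The hard part is precisely this last bookkeeping: controlling the \emph{new} $s$--$t$ paths created by the contraction and showing, via the vanishing of $M$ on cyclic arcs, that none of them introduces a projection outside $P$.
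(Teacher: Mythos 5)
Your proof is correct and follows essentially the same route as the paper's: Lemma~\ref{lem:nonnegative_pi} together with boundedness of $P$ forces every arc lying on a directed cycle to have a zero column in $M$, after which a contraction contradicts minimality. The only differences are in the surgery: you contract a single cyclic arc and carefully lift the new $s$--$t$ paths through the merged node by inserting zero-column arcs (bookkeeping the paper leaves implicit in its contraction of whole strongly connected components), and your observation that minimality already forbids arcs in $\delta^-(s)$ and $\delta^+(t)$ neatly eliminates the paper's special case of $s$ and $t$ sharing a strongly connected component.
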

\begin{proof}
By Lemma \ref{lem:nonnegative_pi} the projection $\pi : \phi \mapsto M\phi$ may be assumed nonnegative. Consider a directed cycle $C$ in network $N$ and the corresponding columns of $M$. Take a point $\phi\in Q$ and consider the projection $\pi(\phi+K\chi^C)$ where $K \in \mathbb{R}_+$. By linearity, $\pi(\phi+K\chi^C) = \pi(\phi) + K \pi(\chi^C)$. If $\pi(\chi^C)$ is a non-zero vector and $K$ is chosen large enough, $\pi(\phi)+K \pi(\chi^C)$ would be outside of polytope $P$, a contradiction to the fact that $\phi+K\chi^C$ satisfies~\eqref{eq:flow_EF} and thus lies in $Q$. 

Hence $\pi(\chi^C)$ is a zero vector. Due to nonegativity of $\pi$, for every arc $a\in A$ contained in at least one directed cycle, the corresponding column of $M$ is zero, that is, $\pi(\chi^{\{a\}}) = 0$. Therefore, if $N$ contains a directed cycle, we can contract every strongly connected component of $N$ to a node and obtain a smaller flow-based extension of $P$, a contradiction. Note that if $s$ and $t$ are in the same strongly connected component of $N$, in which case we are not allowed to contract this component because we assume $s \neq t$, then necessarily $P = \{0\}$ and a minimum size flow-based extension of $P$ is given by a network with two nodes connected by a single arc. The result follows.
\end{proof}

\subsubsection{Equations for the Initial Polytope}

\begin{lem}\label{lem:equations}
Let the equation $c\,x= \delta$ be valid for a nonempty polytope $P \subseteq \mathbb{R}^d$. Then for every node $v$ in the network $N=(V,A)$ associated to a minimum-size uncapacitated flow-based extension $Q \subseteq \mathbb{R}^A$ of $P$, there is a unique $\epsilon\in\R$ such that $c \, \pi(\chi^\sigma)=\epsilon$ for every $s$--$v$ path $\sigma$.
\end{lem}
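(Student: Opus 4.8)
The plan is to exploit the two facts already available for a minimum-size extension — that, by Lemma~\ref{lem:acyclic}, the network $N$ is acyclic, and that minimality forces every arc to be ``used'' — and thereby reduce the assertion about $s$--$v$ paths to the known behaviour of $c\,x=\delta$ on the $s$--$t$ paths of $N$. First I would record the consequence of minimality that will be needed: every arc of $N$, and hence every node, lies on at least one directed $s$--$t$ path. Indeed, since the vertices of $Q$ are exactly the incidence vectors $\chi^\rho$ of the $s$--$t$ paths $\rho$, an arc $a$ lying on no $s$--$t$ path would have $\phi_a = 0$ at every vertex of $Q$, hence $\phi_a = 0$ on all of $Q$; the column $M_a$ would then be irrelevant to $\pi$, so deleting $a$ would produce a strictly smaller uncapacitated flow-based extension of $P$, contradicting minimality. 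In particular the node $v$ in the statement carries both an $s$--$v$ path and a $v$--$t$ path, so the set of $s$--$v$ paths is nonempty (which is what makes the value $\epsilon$ unique, once it is shown to be common).

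The combinatorial heart of the argument is the observation that, in an acyclic network, any $s$--$v$ path $\sigma$ and any $v$--$t$ path $\tau$ meet \emph{only} at $v$. For if they shared some node $x \neq v$, then $x$ would have to precede $v$ in every topological order of $N$ (since $\sigma$ reaches $v$ after passing through $x$) and simultaneously follow $v$ (since $\tau$ leaves $v$ before reaching $x$), which is impossible. Consequently $\sigma$ and $\tau$ are arc-disjoint, their concatenation is a simple $s$--$t$ path, and $\chi^{\sigma\tau} = \chi^\sigma + \chi^\tau$ is a vertex of $Q$; therefore $\pi(\chi^{\sigma\tau}) \in P$.

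To finish, I would fix one $v$--$t$ path $\tau$, let $\sigma$ range over all $s$--$v$ paths, and combine validity of the equation with linearity of $\pi$. Since $\pi(\chi^{\sigma\tau}) \in P$ we have $c\,\pi(\chi^{\sigma\tau}) = \delta$, while $c\,\pi(\chi^{\sigma\tau}) = c\,\pi(\chi^\sigma) + c\,\pi(\chi^\tau)$; hence $c\,\pi(\chi^\sigma) = \delta - c\,\pi(\chi^\tau)$, a quantity depending only on $\tau$ and thus independent of $\sigma$. Setting $\epsilon := \delta - c\,\pi(\chi^\tau)$ gives the claim, and uniqueness is immediate because at least one $s$--$v$ path exists. (Equivalently, one may introduce arc weights $w_a := c\,M_a$, so that $c\,\pi(\chi^\sigma) = \sum_{a \in \sigma} w_a$; the conclusion then says exactly that $w$ admits a node potential, i.e.\ all $s$--$v$ path weights coincide.)

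The step I expect to require the most care is the first one: extracting from minimality that the node $v$ lies on some $s$--$t$ path, so that the $v$--$t$ path $\tau$ used above is guaranteed to exist and $\epsilon$ is genuinely determined. The acyclic internal-disjointness observation is then the decisive combinatorial point, but it is entirely elementary once acyclicity from Lemma~\ref{lem:acyclic} is in hand.
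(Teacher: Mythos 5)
Your proof is correct and takes essentially the same route as the paper: the paper likewise invokes minimality to obtain a $v$--$t$ path $\sigma_3$ and deduces $c\,\pi(\chi^{\sigma_1})=c\,\pi(\chi^{\sigma_2})$ by comparing the two $s$--$t$ paths $\sigma_1\cup\sigma_3$ and $\sigma_2\cup\sigma_3$, which is exactly your identity $c\,\pi(\chi^{\sigma})=\delta-c\,\pi(\chi^{\tau})$ written as a difference. The only distinction is presentational: you spell out two steps the paper leaves implicit, namely that minimality (no deletable arcs) puts $v$ on some $s$--$t$ path, and that acyclicity makes the concatenation a simple path so that $\chi^{\sigma\cup\tau}=\chi^{\sigma}+\chi^{\tau}$.
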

\begin{proof}
 Let $\sigma_1, \sigma_2$ be two paths from source $s$ to node $v$. Due to minimality of the extension there is also a path $\sigma_3$ from $v$ to $t$. Since $\sigma_1\cup \sigma_3$ and $\sigma_2\cup \sigma_3$ define paths from $s$ to $t$, the projections $\pi(\chi^{\sigma_1\cup \sigma_3})$ and $\pi(\chi^{\sigma_2\cup \sigma_3})$ lie in the polytope $P$, and thus satisfy the equation $c\, x = \delta$. Therefore,
 \begin{equation*}
   0= c \, \pi(\chi^{\sigma_1\cup \sigma_3})-c \, \pi(\chi^{\sigma_2\cup \sigma_3})=c \, \pi(\chi^{\sigma_1}) - c \, \pi(\chi^{\sigma_2})\,.
\end{equation*}
To conclude the proof, we may define $\epsilon$ as the value $c \, \pi(\chi^{\sigma_1})$.
\end{proof}

\subsubsection{Extension of Faces}

\begin{lem}\label{lem:faces}
  For every polytope $P \neq \varnothing$ and face $F$ of $P$, there holds $\xc_\text{$s$--$t$ path}(P)\geqslant \xc_\text{$s$--$t$ path}(F)$.
\end{lem}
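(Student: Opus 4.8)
The plan is to start from a minimum-size uncapacitated flow-based extension $Q \subseteq \mathbb{R}^A$ of $P$, realized by a network $N = (V,A)$ and projection $\pi = M\,\cdot$, and to carve out of $N$ a subnetwork whose $s$--$t$ paths project exactly onto $F$. By Lemma~\ref{lem:acyclic} I may assume $N$ is acyclic, and by Lemma~\ref{lem:nonnegative_pi} that $\pi$ is nonnegative; moreover, deleting any node lying on no $s$--$t$ path would shrink $N$, so by minimality every node of $N$ lies on some $s$--$t$ path. I write the (nonempty) face as $F = \{x \in P : c\,x = \delta\}$, where $c\,x \leqslant \delta$ is valid for $P$ and $\delta = \max_{x \in P} c\,x$, treating the improper face $F = P$ and the empty face as trivial.

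Next I would push $c$ through the flow structure. Assign to each arc $a$ the weight $w_a := c\,\pi(\chi^{\{a\}})$, so that $c\,\pi(\chi^\sigma) = \sum_{a \in \sigma} w_a$ for every $s$--$t$ path $\sigma$; since every $s$--$t$ path projects into $P$, such a weight is at most $\delta$, and the maximum equals $\delta$ because $F \neq \varnothing$. As $N$ is acyclic, the longest-path potential $p(v) := \max_{\tau} \sum_{a \in \tau} w_a$, over $s$--$v$ paths $\tau$, is well defined, with $p(s) = 0$ and $p(t) = \delta$. Every arc $a = (u,v)$ satisfies $p(v) \geqslant p(u) + w_a$; call $a$ \emph{tight} when equality holds. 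Telescoping along any $s$--$t$ path shows its weight is at most $p(t) - p(s) = \delta$, with equality precisely when all its arcs are tight. I then let $N' = (V,A')$ be the subnetwork of $N$ keeping only the tight arcs (further pruning nodes off every $s$--$t$ path of $N'$), equipped with the restriction of $\pi$.

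It remains to verify that $\pi$ maps the $s$--$t$ path polytope $Q'$ of $N'$ exactly onto $F$. One inclusion is immediate: every $s$--$t$ path of $N'$ uses only tight arcs, hence has weight $\delta$, hence projects to a point with $c\,x = \delta$ lying in $F$, so $\pi(Q') \subseteq F$. For the reverse inclusion I would show every vertex $v^*$ of $F$ is attained. Such a $v^*$ is a vertex of $P$, and since $\pi(Q) = P$ and the vertices of $Q$ are the $\chi^\sigma$, the extremality of $v^*$ forces $v^* = \pi(\chi^\sigma)$ for some $s$--$t$ path $\sigma$; as $c\,\pi(\chi^\sigma) = c\,v^* = \delta$, this path has maximum weight, so all its arcs are tight and $\sigma$ survives in $N'$. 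Hence $\pi(Q') \supseteq \conv\{\text{vertices of } F\} = F$, giving $\pi(Q') = F$. Since $N'$ has no more arcs than $N$, I conclude $\xc_\text{$s$--$t$ path}(F) \leqslant |A'| \leqslant |A| = \xc_\text{$s$--$t$ path}(P)$.

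The step I expect to require the most care is this surjectivity direction: justifying that each vertex of the face is literally the image of a single $s$--$t$ path (not merely of a convex combination of paths), which rests on the extremality argument above, and checking that the pruned $N'$ is still a legitimate network in the sense of the definition, in particular that it retains at least one $s$--$t$ path and keeps $s \neq t$. The degenerate situations ($F = \varnothing$, $F = P$, or $P = \{0\}$) should be dispatched separately, the first two being trivial and the last matching the single-arc network already noted in the proof of Lemma~\ref{lem:acyclic}.
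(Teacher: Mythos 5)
Your proof is correct, but it takes a genuinely different route from the paper's. The paper's argument is purely polyhedral and very short: $\pi^{-1}(F)\cap Q$ is a face of $Q$, and since the only inequalities in the linear description \eqref{eq:flow_balance}--\eqref{eq:flow_lb} of the flow polytope are the nonnegativity constraints $\phi_a \geqslant 0$, every face of $Q$ has the form $\setDef{\phi\in Q}{\phi_a=0,\ a\in A'}$ for some $A'\subseteq A$; deleting the arcs in $A'$ immediately yields a subnetwork whose $s$--$t$ path polytope projects onto $F$, with no need to choose a valid inequality representing $F$, no potentials, and no appeal to every node lying on an $s$--$t$ path. You instead fix a representation $F=\setDef{x\in P}{c\,x=\delta}$ and build, via the arc weights $w_a=c\,\pi(\chi^{\{a\}})$ and the longest-path potential $p$, an explicit complementary-slackness certificate: the non-tight arcs are unused by every maximum-weight $s$--$t$ path, so they form a subset of the paper's $A'$ (a tight arc can still lie on no maximum-weight $s$--$t$ path, which is why your extra pruning step is a sensible addition, though not needed for the size bound). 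In effect you re-derive by hand, for the one face you need, the structural fact the paper gets wholesale from the polyhedral description of $Q$. What the paper's route buys is brevity and uniformity over all faces; what yours buys is an explicit dual certificate and a concrete description of the surviving subnetwork, and your extremality argument for surjectivity --- each vertex of $F$ is a vertex of $P$, hence equals $\pi(\chi^\sigma)$ for a single path $\sigma$, which is then maximum-weight and hence tight --- is exactly the right way to close the loop you flagged as delicate. Two minor remarks: your invocation of Lemma~\ref{lem:nonnegative_pi} is superfluous, since nonnegativity of $\pi$ is never used (the weights $w_a$ may have either sign); and the assertion that minimality places every node on some $s$--$t$ path should be read modulo isolated nodes (size is counted in arcs), though the paper itself uses the same assertion in the proof of Lemma~\ref{lem:equations}, so you are within its conventions.
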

\begin{proof}
   Let $Q$ be a minimum size $s$--$t$ path extension of $P$ and let $N=(V,A)$ denote the corresponding network. The polytope $\pi^{-1}(F)\cap Q$ is a face of $Q$. From the linear description of $Q$, see \eqref{eq:flow_balance}--\eqref{eq:flow_lb}, we infer 
  \begin{equation*}
      \pi^{-1}(F)\cap Q=\setDef{\phi\in Q}{\phi_a=0\,, a\in A'}
  \end{equation*}
for some $A'\subseteq A$. Hence, the $s$--$t$ path polytope $Q'$ associated with the network $N'=(V, A\setminus A')$ together with the projection $\pi$ defines an $s$--$t$ path extension of face $F$. Because the size of the extension $Q'$ of $F$ is not larger than the size of the extension $Q$ of $P$, we have 
$\xc_\text{$s$--$t$ path}(F) \leqslant \xc_\text{$s$--$t$ path}(P)$.
\end{proof}
 
\section{Lower Bounds} \label{sec:lower_bounds}

Now we provide lower bounds on the size of uncapacitated flow-based extensions or, equivalently (by Lemmas \ref{lem:nonnegative_pi} and \ref{lem:acyclic}), $s$--$t$ path extensions of the (bipartite and non-bipartite) perfect matching polytope and traveling salesman polytope. We start by proving that the $s$--$t$ path extension complexity of the perfect matching polytope of $K_{n,n}$ is $\Theta^*(2^n)$. This is striking because this polytope has $\Theta(n^2)$ facets, and a size-$\Theta(n^2)$ {\em capacitated} flow-based extension. Perhaps less striking are our exponential lower bounds for the perfect matching polytope and traveling salesman polytope of $K_n$. We derive these by combining our lower bound on $\xc_{\text{$s$--$t$ path}}(\PM(K_{n,n}))$ and Lemma \ref{lem:faces}.

\subsection{Bipartite Perfect Matchings}

\begin{thm}\label{thm:bipartite_matchings}
 Every uncapacitated flow-based extension (or, equivalently, $s$--$t$ path extension) of the perfect matching polytope of the complete bipartite graph $K_{n,n}$ has size $\Omega\left(\frac{2^{n}}{\sqrt{n}}\right)$.
\end{thm}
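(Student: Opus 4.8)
The plan is to prove the lower bound by constructing a rich family of faces of $\PM(K_{n,n})$, each of which must be "reachable" in the network via distinct combinatorial data, and then counting. By Lemmas~\ref{lem:nonnegative_pi} and~\ref{lem:acyclic}, I may assume the extension is an $s$--$t$ path extension in an acyclic network $N=(V,A)$ with a nonnegative projection $\pi$, and the size equals $|A|$. The vertices of $\PM(K_{n,n})$ are the $n \times n$ permutation matrices, and each is the image $\pi(\chi^\sigma)$ of some $s$--$t$ path $\sigma$. The key idea is that the valid equations $x(\delta(v))=1$ for all vertices $v$ impose, via Lemma~\ref{lem:equations}, a well-defined "level" value $\epsilon$ on each node of $N$ for each such equation: every $s$--$w$ path accumulates the same total $\pi$-weight on the edges incident to a fixed vertex $v$ of $K_{n,n}$. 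This should let me associate to each network node $w$ a vector of invariants recording how much of each row/column constraint has been "used up" along any path reaching $w$.

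First I would make these invariants precise. For a network node $w$, define for each vertex $v$ of $K_{n,n}$ the quantity $\mathrm{used}_v(w) := c_v\,\pi(\chi^\sigma)$ where $c_v x = x(\delta(v))$ and $\sigma$ is any $s$--$w$ path; Lemma~\ref{lem:equations} guarantees this is well-defined. Since the projection is nonnegative and every vertex constraint has right-hand side $1$, these used-amounts lie in $[0,1]$ and are monotone along arcs. The crucial structural claim I would aim to establish is that the "state" of a node --- essentially which row-vertices and column-vertices have already been matched (saturated, i.e. $\mathrm{used}_v = 1$) versus still free --- is forced to be consistent across all paths through that node. Then, because a completion of any $s$--$w$ path to a full $s$--$t$ path must extend a partial matching on the free vertices, I would argue that two different "saturated patterns" cannot share a network node without creating an invalid projected point (a point outside $\PM(K_{n,n})$, contradicting $\pi(Q)=P$).

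The counting step then runs as follows. I would exhibit roughly $\binom{2n}{n} \approx 2^{2n}/\sqrt{n}$ many combinatorial objects --- for instance, the "balanced" saturation states where exactly half the vertices on each side are matched, corresponding to faces obtained by fixing a sub-permutation --- and show that each requires its own dedicated network node (or arc) that no other can reuse. Since the number of arcs $|A|$ is what we are bounding, and each distinct state consumes a distinct node or a distinct arc, this forces $|A| = \Omega(2^{2n}/\sqrt{n})$, or at least $\Omega(2^n/\sqrt{n})$ after accounting for the fact that an $s$--$t$ path of length $n$ passes through $n$ nodes and the states encountered along a single path form a chain. The target bound $\Omega(2^n/\sqrt{n})$ matches $\binom{n}{n/2}$, so I expect the correct objects to count are the $\binom{n}{\lfloor n/2\rfloor}$ distinct subsets of rows that can be saturated exactly at the midpoint level of the network.

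The hard part will be the middle step: rigorously showing that a network node cannot be shared between two paths that have saturated different subsets of the $K_{n,n}$ vertices. The danger is that the node-level invariants $\mathrm{used}_v(w)$ might take fractional values, blurring the clean "saturated versus free" dichotomy, and that flows (as opposed to integral paths) could smear the argument. I anticipate needing to exploit integrality carefully --- using that the vertices of $Q$ are genuine $s$--$t$ paths $\chi^\sigma$ projecting to $0/1$ permutation matrices --- to deduce that along any single path the used-amounts are in fact $0/1$ at each node, so that the saturation pattern is a genuine subset. Establishing this integrality of the per-node state, and then deriving the contradiction when two incompatible patterns meet, is where the real work lies; the subsequent counting via $\binom{n}{n/2} = \Theta(2^n/\sqrt{n})$ should be routine by Stirling's approximation.
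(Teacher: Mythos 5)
Your setup coincides with the paper's (Lemmas~\ref{lem:nonnegative_pi}, \ref{lem:acyclic} and \ref{lem:equations}; your invariant $\mathrm{used}_v(w)$ is exactly the paper's vector $\epsilon^w$), but the step you yourself flag as the hard part is not merely hard --- it fails. There is no integrality to exploit: the values $\epsilon^w_u$ can be genuinely fractional even in a minimum-size network, because the columns of the projection matrix need not be $0/1$ (an arc may contribute, say, half of the weight of a single edge coordinate, with the other half contributed by a later arc, and nothing in Lemmas~\ref{lem:nonnegative_pi}--\ref{lem:equations} or in minimality excludes this). So the clean ``saturated ($\mathrm{used}=1$) versus free ($\mathrm{used}=0$)'' state you want to attach to each node simply does not exist, and the plan of showing two saturation patterns cannot share a node has no footing. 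The paper's proof sidesteps this entirely by working with the \emph{support} of $\epsilon^w$ rather than with saturation: it only distinguishes $\epsilon^w_u>0$ from $\epsilon^w_u=0$, and shows via nonnegativity that any perfect matching $M$ realized by a path through $w$ has no edge in the cut $\delta(\supp{\epsilon^w})$ of $K_{n,n}$ --- if $uw'\in M$ with $\epsilon^w_u>0=\epsilon^w_{w'}$, then the positive prefix weight at $u$ must sit on the coordinate of $uw'$ (all other edges at $u$ receive total weight $0$ in $\chi^M$), contradicting $\epsilon^w_{w'}=0$. That cut argument is robust to fractional values and is the key idea your proposal is missing.

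Your counting step is also aimed in the wrong direction. You propose to exhibit many saturation patterns each of which ``requires its own dedicated network node,'' but one cannot show that all balanced patterns must occur: the paper's own $O^*(2^n)$ upper-bound network routes every matching through states in which the saturated rows form a \emph{prefix} $\{u_1,\ldots,u_k\}$, so almost all row patterns never appear in any network node. The correct count is a per-node capacity bound: a node $v$ with $\sabs{\supp{\epsilon^v}}=n$ can serve at most $\frac{n}{2}!\,\frac{n}{2}!$ of the $n!$ matchings (the matching must respect the cut, forcing an $n/2$--$n/2$ split), whence at least $n!/\bigl(2\,\frac{n}{2}!\,\frac{n}{2}!\bigr)=\Omega(2^n/\sqrt{n})$ nodes or arcs are needed. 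Finally, you implicitly assume every path passes a ``midpoint-level'' node, but $\sabs{\supp{\epsilon^v}}$ can jump over $n$ along a single arc; the paper handles this by also bounding, for each arc $a=(v_1,v_2)$ with $\sabs{\supp{\epsilon^{v_1}}}<n<\sabs{\supp{\epsilon^{v_2}}}$, the number of matchings realized through $a$, using that such a matching must contain all edges in the support of $\pi(\chi^{\{a\}})$. Without the support-based cut lemma, the per-node capacity count, and the arc case, the proposal does not yield the stated bound.
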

\begin{proof}
  Due to Lemma~\ref{lem:nonnegative_pi}, we may assume that the projection $\pi:\R^{A}\rightarrow\R^{d}$ is given by a linear nonnegative map.
  Consider an $s$--$t$ path extension $Q\subseteq\R^{A}$ with network $N = (V,A)$ and nonnegative linear projection $\pi:\R^{A}\rightarrow\R^{d}$. 

For each vertex $u$ of $K_{n,n}$, the equation
$$
x(\delta(u)) = 1 \iff \sum_{e \in \delta(u)} x_e = 1
$$
is valid for $\PM(K_{n,n})$. From Lemma~\ref{lem:equations}, we conclude that for every node $v$ of $N$ there is a nonnegative vector $\epsilon^v\in\R^{2n}$ such that for every $s$--$v$ path $\sigma$ in the network $N$ and every vertex $u$ of the graph $K_{n,n}$ the following holds:
  \begin{equation*}
      \sum_{e\in\delta(u)}\pi_e(\chi^\sigma)=\epsilon_u^v\,.
  \end{equation*}
We base our analysis on the support of $\epsilon^v$, which we denote $\supp{\epsilon^v}$. 

Now consider a node $v$ of network $N$. For every $s$--$t$ path $\sigma$ going through $v$ and such that $\pi(\chi^{\sigma}) = \chi^{M}$ for some perfect matching $M$ of $K_{n,n}$, matching $M$ and cut $\delta(\supp{\epsilon^v})$ do not have an edge in common. 

Hence if $\sabs{\supp{\epsilon^v}}=n$ the $s$--$t$ paths of $N$ going through $v$ define at most $\frac{n}{2}!\frac{n}{2}!$ perfect matchings $M$ of $K_{n,n}$. 

Moreover, for every arc $a=(v_1,v_2)$ in $N$ with $\sabs{\supp{\epsilon^{v_1}}}=n_1<n$ and $\sabs{\supp{\epsilon^{v_2}}}=n_2>n$ there are at most $\frac{n_1}{2}!\frac{2n-n_2}{2}! \leqslant \frac{n}{2}! \frac{n}{2}!$ perfect matchings $M$  such that there is an $s$--$t$ path $\sigma$ in $N$ with $a \in \sigma $ and $\chi^{M}=\pi(\chi^{\sigma})$, since in this case $\sigma$ contains both nodes $v_1$ and $v_2$ and every such matching $M$ must contain all the edges from the support of $\pi(\chi^{\{a\}})$. 

Since the polytope $Q$ is an extension of $\PM(K_{n,n})$, for every perfect matching $M$ in $K_{n,n}$ there is an $s$--$t$ path $\sigma$ such that $\chi^\sigma$ projects to $\chi^M$. But since $\epsilon^{s}$ is an all zero vector and $\epsilon^{t}$ is an all one vector, this path $\sigma$ must go through a node $v$ with $\sabs{\supp{\epsilon^v}}=n$ or contain an arc $a = (v_1,v_2)$ with $\sabs{\supp{\epsilon^{v_1}}}<n<\sabs{\supp{\epsilon^{v_2}}}$.

Since the total number of perfect matchings in $K_{n,n}$ equals $n!$, network $N$ contains at least
\begin{equation*}
 \frac{n!}{2\frac{n}{2}!\frac{n}{2}!}=\Omega\left(\frac{2^n}{\sqrt{n}}\right)
\end{equation*}
 nodes $v$ with $\sabs{\supp{\epsilon^v}}=n$ or arcs $a=(v_1,v_2)$ with $\sabs{\supp{\epsilon^{v_1}}}<n<\sabs{\supp{\epsilon^{v_2}}}$. The result follows.
\end{proof}

The lower bound in Theorem~\ref{thm:bipartite_matchings} is tight, up to polynomial factors. Indeed, consider a complete bipartite graph $K_{n,n}$ with bipartition $U=\{u_1,\ldots, u_n\}$ and $W=\{w_1,\ldots, w_n\}$. We construct the network $N=(V,A)$ with
\begin{equation*}
  V:=2^W\qquad\text{and}\qquad A:=\setDef{(S_1,S_2)\in V\times V}{S_1\subseteq S_2\text{  and  }\sabs{S_1}+1=\sabs{S_2}}
\end{equation*}
and a linear projection $\pi$ such that for every arc $a=(S_1,S_2)\in A$
\begin{equation*}
    \pi_{u_i,w_j}(\chi^{\{a\}}):=\begin{cases}
                                        1 & \text{if}\quad i=\sabs{S_2},\quad \{w_j\}\cup S_1=S_2\\
					0 & \text{otherwise}
                                       \end{cases}\,.
\end{equation*}
It is not hard to see that every $\varnothing$--$W$ path in this network defines a perfect matching. This fact can be seen algorithmically, as follows. Start with $S=\varnothing$ and repeat the following step until $S = W$: having matched the vertices $v_1,\ldots,v_\sabs{S}$ with the vertices in $S$, select a mate $w \in W\setminus S$  for vertex $v_{\sabs{S}+1}$ and replace $S$ by $S\cup\{w\}$. It follows that the projection of the $\varnothing$--$W$ path polytope of network $N$ coincides with the perfect matching polytope of $K_{n,n}$. Since network $N$ has $n 2^{n-1} = O^*(2^n)$ arcs, we conclude that $\xc(\PM(K_{n,n})) = \Theta^*(2^n)$.

\subsection{Nonbipartite Perfect Matchings}

\begin{thm}\label{thm:complete_matchings}
Every uncapacitated flow-based extension (or, equivalently, $s$--$t$ path extension) of the perfect matching polytope of the complete graph $K_{n,n}$ has size $\Omega\left(\frac{2^{\frac{n}{2}}}{\sqrt{n}}\right)$.
\end{thm}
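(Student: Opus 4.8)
The plan is to reduce the non-bipartite bound to the bipartite bound of Theorem~\ref{thm:bipartite_matchings} by realizing $\PM(K_{m,m})$, with $m=n/2$, as a \emph{face} of $\PM(K_n)$ and then applying Lemma~\ref{lem:faces}. Throughout I may assume $n$ is even, since otherwise $K_n$ admits no perfect matching and $\PM(K_n)=\varnothing$, making the statement vacuous; write $n=2m$. The equivalence between uncapacitated flow-based extensions and $s$--$t$ path extensions is guaranteed by Lemmas~\ref{lem:nonnegative_pi} and~\ref{lem:acyclic}, so it suffices to lower-bound $\xc_\text{$s$--$t$ path}(\PM(K_n))$.

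First I would fix a partition of the vertex set of $K_n$ into two classes $A$ and $B$ with $\sabs{A}=\sabs{B}=m$, and let $E_0$ denote the set of all edges of $K_n$ with both endpoints in $A$ or both in $B$. Since each inequality $x_e\geqslant 0$ is valid for $\PM(K_n)$, the set
\[
F:=\setDef{x\in\PM(K_n)}{x_e=0 \text{ for all } e\in E_0}
\]
is a face of $\PM(K_n)$, being the intersection of $\PM(K_n)$ with the supporting hyperplanes $\{x_e=0\}$, $e\in E_0$. Its vertices are exactly the incidence vectors $\chi^M$ of perfect matchings $M$ of $K_n$ with $M\cap E_0=\varnothing$, that is, the perfect matchings using only edges between $A$ and $B$. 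These are precisely the perfect matchings of the complete bipartite graph $K_{m,m}$ with color classes $A$ and $B$.

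Next I would argue that $F$ is affinely isomorphic to $\PM(K_{m,m})$: dropping the coordinates indexed by $E_0$ (all of which vanish on $F$) is a linear bijection from the affine hull of $F$ onto $\R^{E(K_{m,m})}$ carrying $F$ onto $\PM(K_{m,m})$. Consequently $\xc_\text{$s$--$t$ path}(F)=\xc_\text{$s$--$t$ path}(\PM(K_{m,m}))$; concretely, composing any $s$--$t$ path extension of $F$ with this coordinate projection yields an $s$--$t$ path extension of $\PM(K_{m,m})$ over the same network, hence with the same number of arcs, and vice versa. Combining this with Lemma~\ref{lem:faces} gives
\[
\xc_\text{$s$--$t$ path}(\PM(K_n))\;\geqslant\;\xc_\text{$s$--$t$ path}(F)\;=\;\xc_\text{$s$--$t$ path}(\PM(K_{m,m}))\;=\;\Omega\!\left(\frac{2^{m}}{\sqrt{m}}\right),
\]
where the final equality is Theorem~\ref{thm:bipartite_matchings} applied with parameter $m$. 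Substituting $m=n/2$ converts $\Omega(2^{m}/\sqrt{m})$ into $\Omega(2^{n/2}/\sqrt{n})$, as claimed.

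The only genuinely delicate point is the face identification in the first two steps: one must verify that forbidding the intra-class edges really produces a face whose vertices are exactly the bipartite perfect matchings, and that the resulting coordinate restriction is an affine isomorphism under which $s$--$t$ path extension complexity is invariant. Everything else is a direct appeal to the quoted lemmas and to Theorem~\ref{thm:bipartite_matchings}, so I expect this face-and-projection bookkeeping to be the main, and essentially the only, obstacle.
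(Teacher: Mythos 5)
Your proposal is correct and takes essentially the same route as the paper, whose entire proof is the observation that $\PM(K_{\frac{n}{2},\frac{n}{2}})$ is a face of $\PM(K_n)$ combined with Lemma~\ref{lem:faces} and Theorem~\ref{thm:bipartite_matchings}. You simply spell out the routine details the paper leaves implicit (the face cut out by $x_e=0$ on the intra-class edges, the coordinate-deletion isomorphism preserving $s$--$t$ path extension complexity, and the vacuous odd-$n$ case), all of which are fine.
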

\begin{proof}
  Indeed, the polytope $\PM(K_{\frac{n}{2}, \frac{n}{2}})$ is a face of the polytope $\PM(K_n)$, and thus Lemma~\ref{lem:faces} gives the lower bound.
\end{proof}

In order to construct an $s$--$t$ path extension of size close to the lower bound in Theorem~\ref{thm:complete_matchings}, we consider a complete graph $K_{n}$ with vertex set $U=\{u_1,\ldots, u_n\}$ and construct the network $N=(V,A)$ with
\begin{align*}
  V:=\setDef{S\subseteq U}{\sabs{S}=2k,\ 0 \leqslant k\leqslant \frac{n}{2} \ \text{ and }\ \forall 1 \leqslant i\leqslant k : u_i\in S}\\ A:=\setDef{(S_1,S_2)\in V\times V}{S_1\subseteq S_2\text{  and  }\sabs{S_1}+2=\sabs{S_2}}
\end{align*}
and a linear projection $\pi$ such that for every arc $a=(S_1,S_2)\in A$
\begin{equation*}
    \pi_{u_i,u_j}(\chi^{\{a\}})=\begin{cases}
                                        1 & \text{if }\, \{u_i, u_j\}\cup S_1=S_2\\
					0 & \text{otherwise}.
                                       \end{cases}
\end{equation*}
It is once again easy to verify that this defines an $s$--$t$ path extension, this time of the perfect matching polytope of $K_n$. The idea is that every $\varnothing$--$U$ path in network $N$ defines a perfect matching of $K_n$ and conversely, every perfect matching of $K_n$ corresponds to at least one (actually many) $\varnothing$--$U$ path in $N$. The $\varnothing$--$U$ paths in $N$ actually correspond to perfect matchings whose edges are ordered in such a way that for each $i$, vertex $u_i$ is covered by one of the first $i$ edges in the ordering. Every arc $(S,S \cup \{u_i,u_j\})$ in such a path corresponds to the addition of edge $u_iu_j$ to the matching.


Up to a polynomial factor, the size of the network equals the number of nodes in the network, that is, 
\begin{equation*}
    \sum_{k=0}^{\frac{n}{2}} \binom{n-k}{k}\,.
\end{equation*}
This is due to the fact that the nodes $S$ in the $k$th level of network $N$ are of the form $S = \{u_1,\ldots,u_k\} \cup T$, where $T$ is contained in $U \setminus \{u_1,\ldots,u_k\}$ and has size $k$. Since the number of summands in the above expression is $\frac{n}{2}+1$, the size of the constructed extension is
\begin{equation*}
  O^*\left(\max_{0\leqslant k \leqslant \frac{n}{2}} \binom{n-k}{k}\right)=O^*\left(\max_{0 < k <\frac{n}{2}} \frac{(n-k)^{n-k}}{k^k (n-2k)^{n-2k}}\right)\,,
\end{equation*}
where we used Stirling's formula to simplify the left-hand side. Calculating the derivative of the function $\frac{(n-k)^{n-k}}{k^k (n-2k)^{n-2k}}$, we determine that the maximum in the above interval is achieved in the case when $k$ equals $\frac{2}{5+\sqrt{5}}n$, thus the size of the extension is $O(2^{0.695 n})$.

\subsection{Hamiltonian Cycles}

\begin{thm}\label{thm:complete_traveling}
  Every uncapacitated flow-based extension (or, equivalently, $s$--$t$ path extension) of the traveling salesman polytope of the complete graph $K_{n}$ has size $\Omega\left(\frac{2^{\frac{n}{4}}}{\sqrt{n}}\right)$.
\end{thm}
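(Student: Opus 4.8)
The plan is to exhibit the bipartite perfect matching polytope $\PM(K_{m,m})$, with $m := \lfloor n/4 \rfloor$, as a linear projection of a suitable face of $\PT(K_n)$, and then to chain three monotonicity facts. First, Lemma~\ref{lem:faces}: passing to a face can only decrease the $s$--$t$ path extension complexity. Second, the elementary observation that $\xc_\text{$s$--$t$ path}$ does not increase under linear projection, since composing an $s$--$t$ path extension of a polytope with a further linear map yields an $s$--$t$ path extension of the image on the same network. Third, Theorem~\ref{thm:bipartite_matchings}. Together these give $\xc_\text{$s$--$t$ path}(\PT(K_n)) \geq \xc_\text{$s$--$t$ path}(\PM(K_{m,m})) = \Omega\!\left(2^{m}/\sqrt{m}\right) = \Omega\!\left(2^{n/4}/\sqrt{n}\right)$.

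To build the face I would take $4m$ of the $n$ vertices of $K_n$ and label them $u_1,\dots,u_m$, $a_1,\dots,a_m$, and $w_1^1,w_1^2,\dots,w_m^1,w_m^2$, deferring the surplus $n-4m<4$ vertices to the end. Define the face $F$ of $\PT(K_n)$ by \emph{forcing} the $2m$ vertex-disjoint edges $a_iu_i$ and $w_j^1w_j^2$, that is, intersecting with the faces $x_e=1$ (valid since $x_e\leqslant 1$ holds on $\PT(K_n)$), and by \emph{forbidding} every edge other than these forced edges and the edges $u_iw_j^1$ and $w_j^2a_i$, that is, intersecting with the faces $x_e=0$. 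As an intersection of faces, $F$ is a face, and it is nonempty because at least one Hamiltonian cycle satisfies all the constraints (any alternating tour does).

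I would then identify the vertices of $F$. The forcing and forbidding imply that each $u_i$ uses $a_iu_i$ together with exactly one edge $u_iw_j^1$, each $w_j^1$ uses $w_j^1w_j^2$ together with exactly one edge $u_iw_j^1$, and symmetrically each $a_i$ and each $w_j^2$ uses exactly one edge of the form $w_j^2a_i$. Hence every vertex of $F$ is the union of the forced edges with a perfect matching $P=\{u_iw_j^1\}$ between the $u$'s and the $w^1$'s and a perfect matching $R=\{w_j^2a_i\}$ between the $w^2$'s and the $a$'s, subject only to their union forming a single Hamiltonian cycle that threads $a_i\!\to\! u_i\!\to\! w^1\!\to\! w^2\!\to\! a\!\to\!\cdots$. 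Reading off only the coordinates indexed by the edges $u_iw_j^1$ defines a linear projection $\pi$ sending each such vertex to $\chi^{P}$, so that $\pi(F)=\conv\{\chi^{P}: P \text{ occurs}\}$.

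The crux is to verify that every perfect matching $P$ of $K_{m,m}$ occurs, whence $\pi(F)=\PM(K_{m,m})$: for any given $P$ one can always choose the return matching $R$ so that the forced edges together with $P$ and $R$ close up into a single Hamiltonian cycle, because the $a$-vertices are visited in a fixed cyclic order and $R$ retains enough freedom to realize some single-cycle closure. This connectivity bookkeeping, ensuring exactly one cycle for every matching while keeping $F$ a genuine face, is the main obstacle; note in particular that the naive attempt to make the matching itself the set of free edges fails, which is precisely why each right vertex is split into $w_j^1,w_j^2$ and a free return matching is tolerated. Once $\pi(F)=\PM(K_{m,m})$ is established, the three monotonicity facts yield the bound. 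The remaining divisibility issue is handled by absorbing the surplus $n-4m$ vertices into the forced scaffold (e.g.\ by subdividing one forced edge into forced sub-edges through them), which affects the estimate by at most a constant factor.
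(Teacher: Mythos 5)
Your proposal is correct, and it takes a genuinely more direct route than the paper's. The paper defines a face of $\PT(K_n)$ by forcing the matching $u_iw_i$ between two halves $U$ and $W$ and forbidding all other $U$--$W$ edges, so that the free edges inside $U$ trace out a \emph{nonbipartite} perfect matching; it then invokes Theorem~\ref{thm:complete_matchings} (whose own proof passes to the bipartite face $\PM(K_{n/4,n/4})$ inside $\PM(K_{n/2})$) together with Lemma~\ref{lem:faces}. You collapse these two reductions into one: your four-layer gadget $a_i \to u_i \to w_j^1 \to w_j^2 \to a_{i'}$ makes a face of $\PT(K_n)$ project directly onto $\PM(K_{m,m})$ with $m = \lfloor n/4 \rfloor$, so Theorem~\ref{thm:bipartite_matchings} applies immediately and Theorem~\ref{thm:complete_matchings} is bypassed altogether. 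Both arguments need the monotonicity of $\xc_\text{$s$--$t$ path}$ under linear projections --- the paper uses it silently for its orthogonal projection, while you state it explicitly, which is cleaner. The one place where you wave hands (``$R$ retains enough freedom to realize some single-cycle closure'') admits a one-line verification you should supply: the union of the forced edges with $P$ and $R$ decomposes into graph cycles in bijection with the cycles of the permutation $R \circ P$ on the indices of the $a$-vertices, so given any $P$ take $R := \sigma \circ P^{-1}$ for a fixed $m$-cycle $\sigma$, making $R \circ P = \sigma$ a single cycle; conversely the vertices of your face are exactly the unions with $R \circ P$ an $m$-cycle, so every matching $P$ occurs and $\pi(F) = \PM(K_{m,m})$ as claimed. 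Your treatment of $n \not\equiv 0 \pmod 4$ (a forced path absorbing the surplus vertices) is the same device the paper uses for $n = 4k+r$. The trade-off between the two routes: the paper's is more modular, reusing Theorem~\ref{thm:complete_matchings} as a black box, while yours is self-contained given the bipartite theorem and makes transparent why the exponent is $n/4$ --- each unit of bipartite matching consumes four vertices of the tour. Both yield the same bound $\Omega\left(\frac{2^{n/4}}{\sqrt{n}}\right)$.
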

\begin{proof}
  Assume for now that $n=4k$ for some $k\in\N$, the other cases will be dealt with later. Take a partition of the vertices of $K_n$ in $U=\{u_1,\ldots, u_{2k}\}$ and $W=\{w_1,\ldots,w_{2k}\}$, and consider the following sets of edges in the graph $K_n$:
  \begin{equation*}
      E_0 := \setDef{u_iw_j}{i\neq j,\, 0\leqslant i,j\leqslant 2k}\qquad\text{and}\qquad E_1 := \setDef{u_iw_i}{ 0\leqslant i\leqslant 2k}\,.
  \end{equation*}
  Define the face $F$ of the polytope $\PT(K_n)$ as the set of points in $\PT(K_n)$ such that $x_e=0$ for every $e \in E_0$ and $x_e=1$ for every $e \in E_1$. 

  Let us show that the face $F$  together with an orthogonal projection on the variables corresponding to the edges $u_iu_j$ for $0\leqslant i,j\leqslant 2k$ gives an extension of the perfect matching polytope $\PM(K_{2k})$ (here the complete graph $K_{2k}$ is defined on the vertex set $U$).

  First, every Hamiltonian cycle $C$ in the graph $K_n$ restricted to the edges contained in $U$ is a perfect matching, whenever $\chi^{C}$ belongs to the face $F$. Indeed, for every vertex $u_i$ in $U$ there must be exactly two edges in $C$ adjacent to it. Since the characteristic vector $\chi^{C}$ lies in the face $F$, one of these edges is the edge $u_iw_i$ and the other is contained in $U$.

  Second, every perfect matching $M$ in the graph $K_{2k}$ can be extended to a Hamiltonian cycle $C$ in $K_n$ such that $\chi^{C}$ lies in $F$. Indeed, extend $M$ by another perfect matching $M'$ of $K_{2k}$ to a Hamiltonian cycle in $K_{2k}$. Then the desired hamiltonian cycle $C$ can be defined as the union of $M$, $E_1$ and $\setDef{w_iw_j}{u_iu_j \in M'}$. Thus the result follows from Theorem~\ref{thm:complete_matchings} and Lemma~\ref{lem:faces}.

  If $n=4k+r$, for some $k,r\in\N$, $1\leqslant r\leqslant 3$, the result is obtained in a similar way by taking a bipartition $U=\{u_1,\ldots, u_{2k}\}$ and $W=\{w_1,\ldots,w_{2k+r}\}$ and defining the face $F$ by equations $x_e=0$ for every $e \in E_0$, $x_e=1$ for every $e \in E_1$ and $x_{w_{2k}w_{2k+1}}=\ldots=x_{w_{2k+r-1}w_{2k+r}}=1$, where the edge sets $E_0$ and $E_1$ are defined as above.
\end{proof}

For the traveling salesman polytope there is a $s$--$t$ path extension of size $O^*(2^n)$ constructed in a similar manner as the $s$--$t$ path extension of the perfect matching polytope of $K_{n,n}$. This extension corresponds to a well-known dynamic programming algorithm of Held and Karp for the traveling salesman problem~\cite{heldKarp70}. We define this extension here for completeness.

Consider a complete graph $K_{n}$ with vertex set $U=\{u_1,\ldots, u_n\}$ and construct the network $N=(V,A)$ with
\begin{eqnarray*}
  V &:= &\setDef{(S,v)}{S\subseteq U,\, v\in S, u_1\in S}\cup\{(U,\varnothing)\}\\ 
  A &:= &\setDef{((S_1,v_1),(S_2,v_2))\in V\times V}{S_1\cup\{v_2\}=S_2\text{  and  }\sabs{S_1}+1=\sabs{S_2}}\\
  &&\cup\,\setDef{((U,v),(U,\varnothing))\in V\times V}{v\in U}
\end{eqnarray*}
and a linear projection $\pi$ such that for every arc $a=((S_1,v_1),(S_2,v_2))\in A$, $v_1\in U$, $v_2\in U$
\begin{equation*}
    \pi_{u_i,u_j}(\chi^{\{a\}}):=\begin{cases}
                                        1 & \text{if }\, \{u_i, u_j\}=\{v_1,v_2\}\\
					0 & \text{otherwise}
                                       \end{cases}
\end{equation*}
and for an arc $a=((U,v),(U,\varnothing))\in A$, $v\in U$
\begin{equation*}
    \pi_{u_i,u_j}(\chi^{\{a\}}):=\begin{cases}
                                        1 & \text{if }\,\{u_i, u_j\}=\{u_1,v\} \\
					0 & \text{otherwise}
                                       \end{cases}\,.
\end{equation*}
It is straightforward to see that the network with source $(u_1,\{u_1\})$ and sink $(U,\varnothing)$ generates the desired $s$--$t$ path extension.

\section{Open Problems}

We conclude this paper by stating three open problems.

\begin{enumerate}[(i)]
\item Obtain lower bounds for capacitated flow-based extensions. Although this type of extensions is more expressive than uncapacitated flow-based extensions, we suspect that exponential size lower bounds can be obtained for nonbipartite matchings and Hamiltonian cycles.
\item How difficult is this to compute a small uncapacitated flow-based extension for a given 0/1-polytope? Are there good general lower bounds?
\item All the lower bounds obtained here are of the type $2^{\Omega(\sqrt{d})}$, where $d$ is the dimension of $P$. Find an explicit $0/1$-polytope $P$ such that every uncapacitated flow-based extension has size $2^{\Omega(d)}$. (Notice that every polytope $P$ has an uncapacitated flow-based extension of size at most the number of vertices of $P$, thus this last lower bound would be essentially tight.)
\item Davis-Stober, Doignon, Fiorini, Glineur and Regenwetter~\cite{DDFGR13} give uncapacitated flow-based extensions of size $O^*(2^n)$ for the linear ordering polytope and $O^*(3^n)$ for the interval order polytope. Is there such an extension of size $O^*(c^n)$ for the semiorder polytope? (Semiorders are also known as unit interval orders.)
\end{enumerate}

\section{Acknowledgements}

The authors thank Hans Tiwary for taking part in the early stage of this work, and Michele Conforti, Santanu Dey, Marco Di Summa, Sebastian Pokutta and Dirk Oliver Theis for stimulating discussions.

\bibliographystyle{plain}
\bibliography{flow_extensions}

\end{document}